\documentclass[12pt,twoside,reqno]{amsart}

\usepackage[OT1]{fontenc}
\usepackage{type1cm}
\usepackage[english]{babel}
\usepackage{amsthm}
\usepackage{epsfig}
\usepackage{graphicx,epsfig,subfigure}

\numberwithin{equation}{section}

\theoremstyle{definition}
\newtheorem{thm}{Theorem}[section]
\theoremstyle{definition}

\theoremstyle{definition}

\theoremstyle{definition}

\theoremstyle{definition}

\theoremstyle{remark}
\newtheorem{rem}[thm]{Remark}

\newcommand{\R}{\mathbb{R}}
\newcommand{\Rn}{\mathbb{R}^{n}}

\newcommand{\N}{\mathbb{N}}

\oddsidemargin=21pt
\evensidemargin=21pt
\headheight=12pt
\textheight=592pt
\marginparsep=10pt
\footskip=30pt
\hoffset=0pt
\paperwidth=597pt
\topmargin=20pt
\headsep=25pt
\textwidth=410pt
\marginparwidth=35pt
\marginparpush=7pt
\voffset=0pt
\paperheight=845pt


\begin{document}
\title{Singular Integrals on Sierpinski Gaskets }
\author{Vasilis Chousionis}

\thanks{The author is supported by the Finnish Graduate School in
Mathematical Analysis.} \subjclass[2000]{Primary 42B20}
\keywords{Singular Integrals, Self similar sets}

\begin{abstract}We construct a class of singular
integral operators associated with homogeneous Calder\'{o}n-Zygmund
standard kernels on $d$-dimensional, $d <1$, Sierpinski gaskets
$E_d$. These operators are bounded in $L^2(\mu_d)$ and their
principal values diverge $\mu_d$ almost everywhere, where $\mu_d$ is
the natural (d-dimensional) measure on $E_d$.
\end{abstract}

\maketitle
\section{Introduction}
Given a Radon measure $\mu$ on $\Rn$ and a continuously
differentiable kernel
$K:\Rn\times\Rn\setminus\{(x,y):x=y\}\rightarrow \R$ that satisfies
the antisymmetry condition
\begin{equation*}K(x,y)=-K(y,x)\text { for
}x,y\in\Rn\text{, }x\neq y,
\end{equation*}the singular integral operator $T$ associated with $K$ and $\mu$
is formally given by
\begin{equation*}
T(f)(x)=\int K(x,y)f(y)d\mu y.
\end{equation*}
Notice that the above integral does not usually exist when $x\in
\textmd{spt} \mu$. The truncated singular integral operators
$T_{\varepsilon}$, $\varepsilon >0$;
\begin{equation*}
T_{\varepsilon}(f)(x)=\int_{|x-y|>\varepsilon} K(x,y)f(y)d\mu y,
\end{equation*}
are considered in order to overcome this obstacle. In the same vein
one considers the maximal operator $T^*$
\begin{equation*}
T^*(f)(x)=\underset{\varepsilon > 0}{\sup }|T_{\varepsilon}(f)(x)|
\end{equation*}
and the principal values of $T(f)$ at every $x \in \Rn$ which, if
they exist, are given by
\begin{equation*}
\textmd{p.v.}T(f)(x)=\underset{\varepsilon \rightarrow 0}{\lim
}T_{\varepsilon}(f)(x).
\end{equation*}
The singular integral operator $T$ associated with $\mu$ and $K$ is
said to be bounded in $L^2(\mu)$ if there exists some constant $C>0$
such that for $f\in L^2(\mu)$ and $\varepsilon >0$
\begin{equation*}
\int {|T_{\varepsilon}(f)|}^2 d\mu \leq C\int {|f|}^2d\mu.
\end{equation*}

The natural question as to whether the $L^2(\mu)$-boundedness of the
operator $T$ forces its principal values to exist $\mu$ almost
everywhere has been considered in many papers (see e.g. \cite{MM},
 \cite{MMV}, \cite{T}, \cite{D}, \cite {H}, \cite{MV} and \cite {Pr}).
Even when $\mu$ is an $m$-dimensional
Ahlfors-David (AD) regular measure in $\Rn$:
\begin{equation*}
 C^{-1}r^{m}\leq \mu (B(x,r))\leq Cr^{m} \text{ for }x \in
 \textmd{spt}\mu, 0 <r<\textmd{diam}(\textmd{spt}\mu),
\end{equation*}
and $K$ is any of the coordinate Riesz kernels:
\begin{equation*}
R_i^{m} (x,y)=\frac {x_i-y_i}{|x-y|^{m+1}}\text{ for } i=1,...,n
\end{equation*}
the question remains open for $m>1$.

When $m=1$, or equivalently in the case of the Cauchy transform, the
above question has a positive answer by the results of Mattila,
Melnikov and Verdera (see \cite{MM} and \cite{MMV}). Later on, in
\cite{T}, Tolsa improved the afore mentioned results by removing the
Ahlfors-David regularity assumption.

In different settings the answer to the above question can be
negative. Let $C$ be the 1-dimensional four corners Cantor set 
and $\mu$ its natural (1-dimensional Hausdorff) measure. David in
\cite{D}, constructed Calder\'{o}n-Zygmund standard, or simply CZ
standard, kernels that define operators bounded in $L^2(\mu)$ whose
principal values fail to exist $\mu$ almost everywhere. Although
David's kernels can be chosen odd or even are not homogeneous of
degree -1.

In this note we consider classical plane Sierpinski gaskets of
Hausdorff dimension $d$, $0<d<1$. For each of these $d$-AD regular
sets $E_d$, we find families of CZ standard, smooth, and
antisymmetric kernels of the form
\begin{equation}
\label{ke} K(x,y)=\frac {\Omega((x-y)/|x-y|)}{h(|x-y|)}
\end{equation}
where $h$ is some increasing $C^{\infty}$ function satisfying the
homogeneity condition $$h(r)\simeq r^d$$ for
$0<r<\textmd{diam}(E_d)$ and $\Omega$ is odd on the unit circle
$S^1$. If $\mu_d$ is the restriction of the $d$-dimensional
Hausdorff measure on $E_d$, these kernels define
 singular integral operators bounded in $L^2(\mu_d)$
 whose principal values diverge ${\mu}_d$ almost everywhere. The proof is
based on the $T(1)$-theorem of David and Journ\'{e}, proved in
\cite{dj}, and the symmetry properties of Sierpinski gaskets
allowing heavy cancelations.
\begin{rem}
Unfortunately if in the above kernels we replace the function $h(r)$
by $r^d$, where in this case the kernels would be $d$-homogeneous in
the classical sense, we cannot say if the corresponding operators
are bounded (or not) in $L^2$. However their principal values
diverge ${\mu}_d$ almost everywhere, as the proof of Section 4 goes
through with no changes.
\end{rem}
\begin{rem}
Modified slightly, the proof can be applied to many other symmetric
self similar sets, e.g. the four corners Cantor sets with Hausdorff
dimension less than $1$ or the self similar sets discussed in
\cite{D}. The dimensional restriction is essential for the proof and
it is not known to us if there exist CZ standard kernels, of the
same form as in (\ref{ke}), satisfying the homogeneity condition $h(r)\approx r$,
that define singular integral operators bounded in $L^2$ but whose
principal values diverge almost everywhere.
\end{rem}

\section{Notation and Setting}

Let $\lambda \in (0,1/3)$ and consider the following three
similitudes (depending on $\lambda $) $s_{1}^{\lambda
},s_{2}^{\lambda },s_{3}^{\lambda }:\mathbb{R}^{2}\rightarrow
\mathbb{R}^{2}$
\begin{itemize}
\item $s_{1}^{\lambda }(x,y) =\lambda (x,y)$
\item $s_{2}^{\lambda }(x,y) =\lambda (x,y)+(1-\lambda ,0)$
\item $s_{3}^{\lambda }(x,y) =\lambda
(x,y)+(\frac{1-\lambda}{2},\frac{\sqrt{3}}{2}(1-\lambda)).$
\end{itemize}

Let $I=\{1,2,3\}$ and $I^{\ast }=\underset{n\geq 1}{\bigcup }I^{n}$. The set $%
I^{\ast }$ can be partially ordered in the following way, for
$\alpha ,\beta \in I^{\ast }$,
\begin{equation*}
\alpha \prec \beta \Leftrightarrow \alpha \in I^{n},\beta \in I^{k}\text{ }%
k\geq n\text{ and }\beta \lfloor n=\alpha.
\end{equation*}
Where $\beta \lfloor n$ denotes the restriction of $\beta$ in its
first $n$ coordinates. For $\alpha \in I^{n}$, say $\alpha
=(i_{1},...,i_{n})$, define $s_{\alpha }^\lambda$
$:\mathbb{R}^{2}\rightarrow \mathbb{R}^{2}$ through iteration
\begin{equation*}
s_{\alpha }^{\lambda }=s_{i_{1}}^{\lambda }\circ s_{i_{2}}^{\lambda
}\circ ...\circ s_{i_{n}}^{\lambda }.
\end{equation*}
Let $A$ be the equilateral triangle with vertices $(0,0),(1,0),(1/2,\sqrt{%
3}/2)$. Denote $s_{\alpha }^{\lambda }(A)=S_{\alpha }^{\lambda }$,
$I^{0}=\{0\}$ and $s_{0}^{\lambda }=id$. The limit set of the
iteration
\begin{equation*}
E_{\lambda }=\underset{j\geq 0}{\bigcap }\underset{\alpha \in
I^{j}}{\bigcup }S_{\alpha }^{\lambda }
\end{equation*}
is self similar and in fact it is an $\lambda$-Sierpinski triangle
with Hausdorff dimension
\begin{equation*}
d_{\lambda }=\dim _{\mathcal{H}}E_{\lambda }=-\frac{\log 3}{\log
\lambda }.
\end{equation*}
Notice that for $\lambda \in (0,1/3)$, $d_{\lambda }\in (0,1)$. As a
general
property of self similar sets the measures $\mu _{\lambda }=$ $\mathcal{H}%
^{d_{\lambda }}\lfloor E_{\lambda }$ are $d_\lambda$-AD regular.
Hence there exists a constant $C_{\lambda }$, depending only on
$\lambda $, such that for $x\in E_{\lambda }$ and $0<r\leq1$,
\begin{equation*}
C_{\lambda }^{-1}r^{d_{\lambda }}\leq \mu _{\lambda }(B(x,r))\leq
C_{\lambda }r^{d_{\lambda }}.
\end{equation*}

The spaces $(E_{\lambda },\rho ,\mu _{\lambda })$, where $\rho$ is
the usual Euclidean metric, are simple examples of spaces of
homogeneous type (See \cite{Ch} for definition). We want to find
Calder\'{o}n-Zygmund standard  kernels on $E_{\lambda }\times
E_{\lambda }\setminus\{(x,y):x=
y\}$ that define bounded singular integral operators on $%
L^{2}(\mu _\lambda )$. In that direction, for $\lambda \in (0,1/3)$,
we need to define two auxiliary families of functions.

\emph{The functions $\Omega _{\lambda }$}: For any pair $(x,y)\in \mathbb{R}%
^{2}\times \mathbb{R}^{2}$, $x\neq y$, denote by $\theta _{(x,y)}\in
\lbrack 0,2\pi )$ the angle formed by the vectors $y-x$ and
$e_{1}=(1,0)$.
 For every $\lambda \in (0,1/3)$ there exists some positive number
$\varepsilon _\lambda $ such that

\begin{enumerate}
\item  For all $x,y\in E_{\lambda }$, $x\neq y$
\begin{equation*}
\theta _{(x,y)}\in (\frac{k\pi }{3}-\varepsilon _{\lambda },\frac{k\pi }{3}%
+\varepsilon _{\lambda })\text{ for some }k\in \{0,1,..,5\}.
\end{equation*}

\item  The intervals $(\dfrac{k\pi }{3}-\varepsilon _{\lambda },\dfrac{k\pi
}{3}+\varepsilon _{\lambda })$ are disjoint for $k\in \{0,1,..,5\}$.

\item For any $n\in \N=\{1,2,...\}$ and $\alpha ,\beta,\gamma \in I^n$, $\alpha \neq \beta \neq \gamma $, such
that $\alpha \lfloor n-1=\beta \lfloor n-1=\gamma \lfloor n-1$:
\begin{enumerate}
\item If $x \in S_{\alpha }^{\lambda }$, $y \in S_{\beta }^{\lambda
}$, $z \in S_{\gamma }^{\lambda }$ and $\theta _{(x,y)}\in
(\dfrac{k\pi }{3}-\varepsilon _{\lambda },\dfrac{k\pi
}{3}+\varepsilon _{\lambda })$, for some $k\in \{0,1,...,5\}$, then
$\theta _{(x,z)}\in (\dfrac{m\pi }{3}-\varepsilon _{\lambda
},\dfrac{m\pi }{3}+\varepsilon _{\lambda })$ for $m=(k+1)
\textmd{mod}6$ or $m=(k-1)\textmd{mod}6$.

\item If $x,z\in S_{\alpha }^{\lambda }$, $y\in S_{\beta }^{\lambda
}$ and
 $\theta _{(x,y)}\in (\dfrac{k\pi }{3}-\varepsilon _{\lambda },\dfrac{k\pi }{3}%
+\varepsilon _{\lambda })$ then $%
\theta _{(z,y)}\in (\dfrac{k\pi }{3}-\varepsilon _{\lambda },\dfrac{k\pi }{3}%
+\varepsilon _{\lambda })$ as well.
\end{enumerate}
\end{enumerate}

Now we can define $C^{\infty}$ functions $\Omega _{\lambda }$ on $%
S^{1}$ satisfying
\begin{enumerate}

\item  $\Omega _{\lambda }(z)=(-1)^{k}$ for $\theta _{(z,0)}\in (\frac{k\pi }{
3}-\varepsilon _{\lambda },\frac{k\pi }{3}+\varepsilon _{\lambda
})$, $k\in \{0,1,...,5\}$,
\item $\Omega _{\lambda }(-z)=-\Omega
_{\lambda }(z)$  for every  $z\in S^1$.

\end{enumerate}

Observe that the second condition also implies
\begin{equation*}
\int_{S^1}\Omega_\lambda(z) d\sigma z=0
\end{equation*}
where $\sigma$ is the normalized surface measure on $S^1$.

\emph{The functions $h_{\lambda }$}: Fix some $\lambda \in (0,1/3)$,
and choose any function $h_\lambda:(0,\infty) \rightarrow \R$ with
the following properties,
\begin{enumerate}
\item  $h_{\lambda }$ is $C^{\infty }$,

\item  $h_{\lambda }$ is increasing,

\item  $h_{\lambda} \lfloor [(\frac{1}{\lambda}-2)\lambda ^{k},\lambda ^{k-1}]=\lambda ^{(k-1)d_{\lambda }}$
for every $k\in \mathbb{N}$.
\end{enumerate}
It follows that for $r\in (0,1]$, $h_{\lambda \text{ }}(r)\approx
r^{d_{\lambda }}$. In fact
\begin{equation*}
r^{d_{\lambda }}/C_{\lambda }\leq h_{\lambda \text{ }}(r)\leq
C_{\lambda }r^{d_{\lambda }}\text{ \ for }0<r\leq 1
\end{equation*}
where $C_{\lambda }=\lambda ^{-d_{\lambda }}$.

Hence we are able, using the above families, to define appropriate
kernels
\begin{equation*}
K_{\lambda }:E_{\lambda }\times E_{\lambda }\backslash
\{(x,y):x=y\}\rightarrow \mathbb{R}
\end{equation*}
as
\begin{equation*}
K_{\lambda }(x,y)=\frac{\Omega _{\lambda }((x-y)/\left| x-y\right| )}{%
h_{\lambda }(|x-y|)}.
\end{equation*}
For the kernels $K_{\lambda }$ there exists some constant $C$ such
that for all $x,y,z\in E_{\lambda }$, $x\neq y$, satisfying $\left|
x-z\right| <(1-2 \lambda)\left| x-y\right|$,

\begin{equation}
\label{k1}
\left| K_{\lambda }(x,y)\right| \leq \frac{C}{\left|
x-y\right| ^{d_{\lambda }}},
\end{equation}
\begin{equation}
\label{k2}
K_{\lambda }(x,y)-K_{\lambda }(z,y) =0
\end{equation}
Condition (\ref{k1}) follows immediately from the definition of
$K_{\lambda}$. To prove (\ref{k2}), let $k\in\mathbb{N}^{\ast
}=\{0,1,..\}$ be the largest natural number such that $x,y\in
S_{\alpha }^{\lambda }$ for some $ \alpha \in I^{k}$. Therefore
\begin{equation*}
x\in s_{i}^{\lambda }(S_{\alpha }^{\lambda })\text{ and }y\in
s_{j}^{\lambda }(S_{\alpha }^{\lambda })
\end{equation*}
for some $i,j \in I$, $i \neq j$. This implies that,
\begin{equation}
\label{1} (\frac{1}{\lambda }-2)\lambda ^{k+1}\leq\left| y-x\right|
\leq \lambda ^{k}.
\end{equation}
Since $\left| x-z\right| <(1-2 \lambda)\left| x-y\right|$ we get
\begin{equation*}
\left| x-z\right| <(1-2\lambda)\lambda ^{k}.
\end{equation*}
As
\begin{equation*}
d(s_{i}^{\lambda }(S_{\alpha }^{\lambda }),s_{q}^{\lambda
}(S_{\alpha }^{\lambda }))=(1-2\lambda)\lambda ^{k}\text{ for } q\in
I\text{, }q \neq i,
\end{equation*}
and
\begin{equation*}
S_{\alpha }^{\lambda }=\underset{p\in I}{\bigcup }s_{p}^{\lambda
}(S_{\alpha }^{\lambda }),
\end{equation*}
we deduce that $z\in s_{i}^{\lambda }(S_{\alpha }^{\lambda })$ and
\begin{equation}
\label{2} (\frac{1}{\lambda }-2)\lambda ^{k+1}\leq\left| y-z\right|
\leq \lambda ^{k}.
\end{equation}
Therefore as $x,z\in s_{i}^{\lambda }(S_{\alpha }^{\lambda })$ and
$y\in
s_{j}^{\lambda }(S_{\alpha }^{\lambda })$%
\begin{equation}
\label{3}
\theta (x,y),\theta (z,y)\in \left( m\frac{\pi }{3}-\varepsilon _{\lambda },m%
\frac{\pi }{3}+\varepsilon _{\lambda }\right)
\end{equation}
for some $m\in \{0,1,..,5\}$. From (\ref{1}), (\ref{2}), (\ref{3})
and the definition of $h_\lambda$ and $\Omega_\lambda$ we deduce
that
\begin{equation*}
h_{\lambda }(\left| x-y\right| )=h_{\lambda }(\left| z-y\right|
)=\lambda^{kd_{\lambda}},
\end{equation*}
and
\begin{equation*}
\Omega _{\lambda }\left( \frac{x-y}{\left| x-y\right| }\right)
=\Omega _{\lambda }\left( \frac{z-y}{\left| z-y\right| }\right).
\end{equation*}
Hence
\begin{equation*}
 K_{\lambda }(x,y)-K_{\lambda }(z,y) =0
\end{equation*}
and by antisymmetry
\begin{equation*}
 K_{\lambda }(y,x)-K_{\lambda }(y,z) =0.
\end{equation*}
It follows that the kernels $K_\lambda$ are  CZ standard, in fact
condition (\ref{k2}) is much stronger than the ones appearing in the
usual definitions of  CZ standard kernels (see e.g. \cite{Ch},
\cite{Db} and \cite{j}).

As stated before, we want to show that the kernels define singular
integral operators that are bounded in $L^{2}(\mu _{\lambda })$, and
examine their convergence properties.

\section{$L^{2}$ boundedness}

\begin{thm}
For all $\lambda \in (0,1/3)$ the maximal singular integral
operators $T_{\lambda }^{\ast}$,
\begin{equation*}
T_{\lambda }^{\ast}(f)(x)=\underset{\varepsilon > 0}{\sup }\left|
\int_{\left| x-y\right| > \varepsilon }K_{\lambda }(x,y)f\left(
y\right) d\mu _{\lambda }y\right|,
\end{equation*}
are bounded in $L^{2}(\mu _{\lambda })$.
\end{thm}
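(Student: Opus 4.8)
The plan is to apply a $T(1)$-type theorem on the space of homogeneous type $(E_\lambda,|\cdot|,\mu_\lambda)$ --- recall that $\mu_\lambda$ is $d_\lambda$-AD regular, hence doubling --- and then to pass from the resulting uniform $L^2(\mu_\lambda)$-bound on the truncations $T_{\lambda,\varepsilon}$ to a bound on $T_\lambda^\ast$ through Cotlar's inequality. We already know that $K_\lambda$ is CZ standard, in the strong form \eqref{k1}--\eqref{k2}, so what must still be checked is the weak boundedness property together with $T_\lambda 1,T_\lambda^\ast 1\in\mathrm{BMO}(\mu_\lambda)$. Since $K_\lambda$ is antisymmetric, $T_\lambda^\ast=-T_\lambda$, so the two $\mathrm{BMO}$ conditions coincide and it is enough to control $T_\lambda 1$.

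For the weak boundedness property one would take a ball $B=B(x_0,r)$ and functions $\varphi,\psi$ supported in $B$ with $\|\varphi\|_\infty,\|\psi\|_\infty\le1$ and Lipschitz constants at most $r^{-1}$, and use the antisymmetry of $K_\lambda$ to symmetrize the double integral:
\[
\langle T_{\lambda,\varepsilon}\varphi,\psi\rangle
=\frac12\iint_{|x-y|>\varepsilon}K_\lambda(x,y)\bigl(\varphi(y)\psi(x)-\varphi(x)\psi(y)\bigr)\,d\mu_\lambda y\,d\mu_\lambda x .
\]
Since $|\varphi(y)\psi(x)-\varphi(x)\psi(y)|\le 2r^{-1}|x-y|$ on $B\times B$, the size estimate \eqref{k1} bounds the right-hand side by $Cr^{-1}\iint_{B\times B}|x-y|^{1-d_\lambda}\,d\mu_\lambda y\,d\mu_\lambda x$, and a routine dyadic splitting of the inner integral using the AD regularity of $\mu_\lambda$ gives $\iint_{B\times B}|x-y|^{1-d_\lambda}\,d\mu_\lambda y\,d\mu_\lambda x\le Cr\,\mu_\lambda(B)$; hence $|\langle T_{\lambda,\varepsilon}\varphi,\psi\rangle|\le C\mu_\lambda(B)$, uniformly in $\varepsilon$.

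The real content is $T_\lambda 1\in\mathrm{BMO}(\mu_\lambda)$, and here I would exploit the symmetry of $E_\lambda$ to get the stronger statement $\sup_{\varepsilon>0}\|T_{\lambda,\varepsilon}1\|_{L^\infty(\mu_\lambda)}<\infty$. Fix $x\in E_\lambda$ and let $A=T_0\supset T_1\supset T_2\supset\cdots$ be the generation-$k$ triangles $T_k=S_{\alpha_k}^\lambda$ with $x\in T_k$; then, up to a $\mu_\lambda$-null set, $E_\lambda=\{x\}\cup\bigsqcup_{k\ge0}\bigl(E_\lambda\cap(T_k\setminus T_{k+1})\bigr)$, where $T_k\setminus T_{k+1}$ is the union of the two generation-$(k+1)$ subtriangles of $T_k$ that do not contain $x$. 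By \eqref{1} this union lies in the annulus $\{(1-2\lambda)\lambda^{k}\le|x-y|\le\lambda^{k}\}$, and because $\lambda<1/3$ (equivalently $d_\lambda<1$) these annuli are pairwise disjoint as $k$ varies. The key observation is that on each of the two pieces of $T_k\setminus T_{k+1}$ the function $y\mapsto K_\lambda(x,y)$ is \emph{constant}: $|x-y|$ remains in a single block on which $h_\lambda$ is constant, and --- as in the proof of \eqref{k2}, which uses property (3)(b) of $\varepsilon_\lambda$ --- the direction $(x-y)/|x-y|$ remains inside one of the arcs around a multiple of $\pi/3$, on which $\Omega_\lambda$ is constant. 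Moreover property (3)(a) of $\varepsilon_\lambda$ forces the two constants obtained from the two pieces to have opposite signs (the two discarded subtriangles are seen from $x$ in directions differing by $\pi/3$, and $\Omega_\lambda$ changes sign between consecutive arcs), while the exact self-similarity of $\mu_\lambda$ gives the two pieces equal $\mu_\lambda$-measure. Consequently, whenever $T_k\setminus T_{k+1}\subset\{|x-y|>\varepsilon\}$ it contributes exactly $0$ to $T_{\lambda,\varepsilon}1(x)$, and whenever $T_k\setminus T_{k+1}\subset B(x,\varepsilon)$ it contributes nothing at all; since the annuli are disjoint, for a given $\varepsilon$ only one ``transitional'' generation $k_\varepsilon$ survives, and for it \eqref{k1} and AD regularity bound the contribution by $C\lambda^{-k_\varepsilon d_\lambda}\mu_\lambda(T_{k_\varepsilon})\le C'$. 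Summing over generations, $|T_{\lambda,\varepsilon}1(x)|\le C$ with $C$ independent of $x$ and $\varepsilon$, so $T_\lambda 1\in L^\infty(\mu_\lambda)\subset\mathrm{BMO}(\mu_\lambda)$, and likewise $T_\lambda^\ast 1=-T_\lambda 1\in\mathrm{BMO}(\mu_\lambda)$.

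With \eqref{k1}, \eqref{k2}, the weak boundedness property, and $T_\lambda 1,T_\lambda^\ast 1\in\mathrm{BMO}(\mu_\lambda)$ in hand, the $T(1)$-theorem in its homogeneous-space form (cf. \cite{dj}, \cite{Ch}) yields $\sup_{\varepsilon>0}\|T_{\lambda,\varepsilon}\|_{L^2(\mu_\lambda)\to L^2(\mu_\lambda)}<\infty$. To finish, I would invoke Cotlar's inequality: for an antisymmetric CZ standard kernel over a doubling measure one has the pointwise bound $T_\lambda^\ast f\le C\bigl(M(T_\lambda f)+Mf\bigr)$, where $M$ is the Hardy--Littlewood maximal operator on $(E_\lambda,\mu_\lambda)$, and $M$ is bounded on $L^2(\mu_\lambda)$ because $\mu_\lambda$ is doubling; hence $\|T_\lambda^\ast f\|_{L^2(\mu_\lambda)}\le C\|f\|_{L^2(\mu_\lambda)}$. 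I expect the main difficulty to lie in the cancellation argument of the third paragraph: one has to verify carefully, using properties (1)--(3) of $\varepsilon_\lambda$, the strong cancellation \eqref{k2}, and the exact self-similarity of $\mu_\lambda$, that at each generation the two discarded subtriangles contribute with equal magnitude and opposite sign, and that for each $\varepsilon$ at most one generation is transitional.
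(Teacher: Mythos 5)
Your proposal is correct and follows essentially the same route as the paper: antisymmetry of $K_\lambda$ gives the weak boundedness property for free, the gasket's symmetry gives the exact cancellation of the truncated operators applied to $\mathbf{1}$ (the paper proves $T_{\lambda}^{n}(\mathbf{1})=0$ for the truncations at radii $\lambda^{n}$ by induction, which is the same mechanism as your generation-by-generation cancellation), and then the David--Journ\'e $T(1)$ theorem in the homogeneous-space setting together with Cotlar's inequality yield the $L^{2}(\mu_\lambda)$ bound for $T_{\lambda}^{*}$. The only point to tidy is that in Cotlar's inequality the term $M(T_\lambda f)$ must refer to a fixed $L^{2}$-bounded operator obtained as a weak limit of the uniformly bounded truncations (as the paper does), since pointwise principal values of $T_\lambda f$ need not exist.
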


\begin{proof}The idea is to use the $T(1)$ theorem of David and Journ\'{e} in the context of \cite{D}. Start by defining $T_{\lambda}^{n}$, for $n\geq 1$, as
\begin{equation*}
T_{\lambda}^{n}(f)(x)=\int_{\left| x-y\right| > \lambda
^{n}}K_{\lambda }(x,y)f\left( y\right) d\mu _{\lambda }y.
\end{equation*}
We want to show that $T_{\lambda}^{n}(\mathbf{1)=}0$ for all $n\in \mathbb{N%
}$, by induction.

For $n=1$: Let $x\in S_{i}^{\lambda} \cap E_{\lambda } $ for some
$i\in I$. If $j,k\in I\setminus\{i\}$, $j \neq k$, we get
\begin{eqnarray*}
T_{\lambda }^{1}(\mathbf{1})(x) &=&\int_{\left| x-y\right| > \lambda
}K_{\lambda }(x,y)d\mu _{\lambda }y \\
&=&\int_{S_{j}^{\lambda }\cup S_{k}^{\lambda
}}K_{\lambda }(x,y)d\mu _{\lambda }y \\
&=&\int_{S_{j}^{\lambda }}\frac{\Omega _{\lambda }((x-y)/\left|
x-y\right|
)}{h_{\lambda }(|x-y|)}d\mu _{\lambda }y+\int_{S_{k}^{\lambda }}\frac{%
\Omega _{\lambda }((x-y)/\left| x-y\right| )}{h_{\lambda
}(|x-y|)}d\mu _{\lambda }y.
\end{eqnarray*}
Furthermore there exists some $m\in \{0,1,..,5\}$ such that for
$y\in $ $S_{j}^{\lambda }$,
\begin{equation*}
\Omega _{\lambda }\left( \frac{x-y}{\left| x-y\right| }\right)
=(-1)^{m}
\end{equation*}
and for $y\in $ $S_{k}^{\lambda}$,
\begin{equation*}
\Omega _{\lambda }\left( \frac{x-y}{\left| x-y\right| }\right)
=(-1)^{m+1}.
\end{equation*}
Hence
\begin{equation*}
T_{\lambda }^{1}(\mathbf{1})(x)=(-1)^{m}\int_{S_{j}^{\lambda }}\frac{1}{%
h_{\lambda }(|x-y|)}d\mu _{\lambda }y+(-1)^{m+1}\int_{S_{k}^{\lambda }}\frac{%
1}{h_{\lambda }(|x-y|)}d\mu _{\lambda }y
\end{equation*}
But for $y\in S_{j}^{\lambda }\cup S_{k}^{\lambda }$ \ we have that $%
1-2\lambda \leq$ $\left| x-y\right| \leq 1$ and consequently
$h_\lambda(\left| x-y\right| )=1$. Thus
\begin{equation*}
T_{\lambda }^{1}(\mathbf{1})(x)=(-1)^{m}\mu _{\lambda
}(S_{j}^{\lambda })+(-1)^{m+1}\mu _{\lambda }(S_{k}^{\lambda })=0.
\end{equation*}

Suppose that $T_{\lambda }^{n}(\mathbf{1})=0$ and
let some $x\in E_{\lambda }$. We want to show that $T_{\lambda }^{n+1}(\mathbf{1})(x)=0$%
. Let $x\in S_{\alpha }^{\lambda }$ for some $\alpha
=(i_{1},i_{2},..,i_{n},i_{n+1})\in I^{n+1}$. If $\beta
=(i_{1},i_{2},..i_{n},j)$
 and $\gamma =(i_{1},i_{2},..i_{n},k)$ for $j,k\in
I\setminus\{i_{n+1}\}$, $j \neq k$,
\begin{eqnarray*}
T_{\lambda }^{n+1}(\mathbf{1})(x) &=&\int_{\left| x-y\right|
>\lambda
^{n+1}}K_{\lambda }(x,y)d\mu _{\lambda }y \\
&=&\int_{\left| x-y\right| > \lambda ^{n}}K_{\lambda }(x,y)d\mu
_{\lambda }y+\int_{S_{\beta }^{\lambda }}K_{\lambda }(x,y)d\mu
_{\lambda
}y+\int_{S_{\gamma }^{\lambda }}K_{\lambda }(x,y)d\mu _{\lambda }y \\
&=&\int_{S_{\beta }^{\lambda }}\frac{\Omega _{\lambda }((x-y)/\left|
x-y\right| )}{h_{\lambda }(|x-y|)}d\mu _{\lambda }y+\int_{S_{\gamma
}^{\lambda }}\frac{\Omega _{\lambda }((x-y)/\left| x-y\right| )}{%
h_{\lambda }(|x-y|)}d\mu _{\lambda }y,
\end{eqnarray*}
since by the induction hypothesis
\begin{equation*}
T_{\lambda }^{n}(\mathbf{1})(x)=\int_{\left| x-y\right| > \lambda
^{n}}K_{\lambda }(x,y)d\mu _{\lambda }y=0.
\end{equation*}
Using exactly the same argument as in the case for $n=1$
\begin{equation*}
\int_{S_{\beta }^{\lambda }}\frac{\Omega _{\lambda }((x-y)/\left|
x-y\right|
^{-1})}{h_{\lambda }(|x-y|)}d\mu _{\lambda }y+\int_{S_{\gamma }^{\lambda }}%
\frac{\Omega _{\lambda }((x-y)/\left| x-y\right| ^{-1})}{h_{\lambda }(|x-y|)}%
d\mu _{\lambda }y=0.
\end{equation*}
Therefore $T_{\lambda }^{n+1}(\mathbf{1})(x)=0$, completing the
induction. As $T_{\lambda }^{n}(\mathbf{1})=0$ for all $n\in
\mathbb{N}$ the same holds for their transposes.

Due to the structure of the spaces $(E_\lambda,\mu_\lambda,\rho)$
the proof of the $T(1)$ theorem in this setting is essentially the
same with the one appearing in \cite{Db}. As commented in \cite{Db}
and in \cite{D}, in order to be able to use the $T(1)$ theorem we
need some suitable decomposition of dyadic type, as in \cite{Ch} and
\cite{Db}, to replace the usual dyadic cubes in $\Rn$. In our
setting the required such family $\mathcal{R}$ consists of all the
triangles appearing in every step of the iteration process, i.e.
\begin{equation*}
\mathcal{R}_{k}^{\lambda }=\{S_{\alpha }^{\lambda }:\alpha \in
I^{k}\}\text{ for }k\in \mathbb{N}^{\ast }.
\end{equation*}
and
\begin{equation*}
\mathcal{R}=\{\mathcal{R}_{k}^{\lambda }:k\in \mathbb{N}^{\ast }\}.
\end{equation*}
In the assumptions of the original David-Journ\'{e} $T(1)$ theorem
the operators should also satisfy an extra condition the so called
weak boundedness. This condition is only used in the proof, as it
appears in \cite{Db}, to show that there exists some absolute
constant $C$, such that for all dyadic cubes $Q$
\begin{equation*}
\left| \int_{Q}T(\mathbf{1}_{Q})(x)dx\right| \leq C\left| Q\right|.
\end{equation*}
But since the operators $T_{\lambda}^{n}$ are canonically associated
with antisymmetric kernels the weak boundedness comes for free, see
e.g. \cite{Ch}.

Applying the $T(1)$ theorem we derive that every element of the
sequence $\{T_{\lambda }^{n}\}_{n\in \N}$ is bounded in $L^{2}(\mu
_{\lambda })$ with bounds not depending on $n$. This fact enables us
to extract some linear $L^{2}(\mu _{\lambda })$-bounded operator $T$
as a weak limit of some subsequence of $\{T_{\lambda }^{n}\}_{n\in
\N}$. Finally using the version of Cotlar's inequality, as it is
stated in (\cite{Db}, p.59), we get that there exists some constant
$C$ such that for all $f \in L^{2}(\mu_\lambda)$
\begin{equation*}
T_{\lambda }^{*}(f)(x)\leq
C(M_\lambda(Tf)(x)+{M_\lambda(|f|^{\sqrt{2}}(x))}^{\sqrt{2}}),
\end{equation*}
where $M_\lambda$ is the Hardy-Littlewood maximal operator related
to the measure $\mu_\lambda$. Therefore we conclude that $T_
{\lambda}^{*}$ is bounded in $L^{2}(\mu_\lambda)$.
\end{proof}

\section{Divergence of Principal Values}

\begin{thm}
Let $\lambda \in (0,1/3)$. For $\mu _{\lambda }$ almost every point
in $ E_{\lambda }$ the principal values of the singular integral
operator $T_{ \lambda}$ do not exist. \end{thm}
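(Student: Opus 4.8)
The plan is to prove that $\lim_{\varepsilon\to 0}T_{\lambda,\varepsilon}(\mathbf 1)(x)$ fails to exist for $\mu_\lambda$-a.e. $x$, where $T_{\lambda,\varepsilon}(\mathbf 1)(x)=\int_{|x-y|>\varepsilon}K_\lambda(x,y)\,d\mu_\lambda y$ and $\mathbf 1\in L^2(\mu_\lambda)$ since $E_\lambda$ is bounded. From the proof of the previous theorem we already have $T_\lambda^n(\mathbf 1)=0$ for every $n$, that is, $T_{\lambda,\lambda^n}(\mathbf 1)(x)=0$ for all $n$ and all $x$; so along the sequence $\varepsilon=\lambda^n$ the truncations vanish, and it suffices to produce, for $\mu_\lambda$-a.e.\ $x$, radii $\varepsilon_j\downarrow 0$ along which $|T_{\lambda,\varepsilon_j}(\mathbf 1)(x)|$ stays bounded below by a fixed positive constant.

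First I would look at the truncations at scales strictly between consecutive powers of $\lambda$. Since $\lambda<1/3$ the three first-level triangles are pairwise disjoint, so every $x\in E_\lambda$ has a unique address $\omega=\omega(x)\in I^{\mathbb{N}}$; write $P_k(x)=S^\lambda_{\omega\lfloor k}$ for the level-$k$ triangle through $x$ and let $Q_n'(x),Q_n''(x)$ be the two children of $P_n(x)$ different from $P_{n+1}(x)$. For $\varepsilon\in[(1-2\lambda)\lambda^n,\lambda^n]$ one checks, using $\operatorname{diam}P_{n+1}(x)=\lambda^{n+1}\le(1-2\lambda)\lambda^n\le\varepsilon$ and $\operatorname{dist}\bigl(x,E_\lambda\setminus P_n(x)\bigr)\ge(1-2\lambda)\lambda^{n-1}>\lambda^n$, that $\{y\in E_\lambda:|x-y|>\varepsilon\}\subset Q_n'(x)\cup Q_n''(x)$. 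On each of these two triangles $|x-y|$ lies in $[(1/\lambda-2)\lambda^{n+1},\lambda^n]$, so $h_\lambda(|x-y|)=\lambda^{nd_\lambda}$, and, by the angle properties of $\Omega_\lambda$ used in Section~2, $\Omega_\lambda((x-y)/|x-y|)$ is constant on each, with opposite signs on $Q_n'(x)$ and $Q_n''(x)$ — this is precisely the cancellation already exploited for $T_\lambda^n(\mathbf 1)$. Hence
\begin{equation*}
T_{\lambda,\varepsilon}(\mathbf 1)(x)=\pm\,\lambda^{-nd_\lambda}\bigl[\mu_\lambda\bigl(Q_n'(x)\setminus B(x,\varepsilon)\bigr)-\mu_\lambda\bigl(Q_n''(x)\setminus B(x,\varepsilon)\bigr)\bigr].
\end{equation*}
Rescaling $P_n(x)$ onto $A$ by $s^\lambda_{\omega\lfloor n}$ (a similarity of ratio $\lambda^n$, under which $\mu_\lambda$ rescales by $\lambda^{nd_\lambda}$) shows that $M_n(x):=\sup\{\,|T_{\lambda,\varepsilon}(\mathbf 1)(x)|:(1-2\lambda)\lambda^n\le\varepsilon\le\lambda^n\,\}$ depends only on the shifted address: $M_n(x)=\widetilde G(\sigma^n\omega(x))$, where $\sigma$ is the shift on $I^{\mathbb{N}}$ and
\begin{equation*}
\widetilde G(\nu)=\sup_{1-2\lambda\le t\le 1}\bigl|\mu_\lambda\bigl(S_j^\lambda\setminus B(x_\nu,t)\bigr)-\mu_\lambda\bigl(S_k^\lambda\setminus B(x_\nu,t)\bigr)\bigr|,
\end{equation*}
with $x_\nu\in E_\lambda$ the point of address $\nu$ and $\{j,k\}=I\setminus\{\nu_1\}$.

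The real work — and the step I expect to be the main obstacle — is a uniform lower bound: there is $\delta_0=\delta_0(\lambda)>0$ with $\widetilde G(\nu)\ge\delta_0$ whenever $\nu_1\ne\nu_2$, i.e.\ whenever the rescaled point sits in an ``edge'' child $S^\lambda_{\nu_1\nu_2}$ rather than in the ``corner'' child $S^\lambda_{\nu_1\nu_1}$. After relabelling by a symmetry of $A$ we may take $\nu_1=1$, so $x_\nu\in S^\lambda_{12}\cup S^\lambda_{13}$ and therefore $\operatorname{dist}(x_\nu,\ell_1)\ge\tau_0>0$ with $\tau_0$ depending only on $\lambda$, where $\ell_1$ is the symmetry axis of $A$ through the vertex $V_1=(0,0)$. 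Let $\iota$ be the reflection in $\ell_1$: it fixes $V_1$, interchanges $S_2^\lambda$ and $S_3^\lambda$, and preserves $\mu_\lambda$, so
\begin{equation*}
\mu_\lambda\bigl(S_2^\lambda\setminus B(x_\nu,t)\bigr)-\mu_\lambda\bigl(S_3^\lambda\setminus B(x_\nu,t)\bigr)=\int_{S_2^\lambda}\bigl(\mathbf 1_{\{|x_\nu-y|>t\}}-\mathbf 1_{\{|\iota x_\nu-y|>t\}}\bigr)\,d\mu_\lambda y,
\end{equation*}
and it suffices that the distributions of $y\mapsto|x_\nu-y|$ and $y\mapsto|\iota x_\nu-y|$ under $\mu_\lambda|_{S_2^\lambda}$ — both supported in $[1-2\lambda,1]$ and of equal mass $m=\mu_\lambda(S_2^\lambda)$ — differ quantitatively. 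I would compare second moments: these equal $m|x_\nu-\bar y|^2+c$ and $m|\iota x_\nu-\bar y|^2+c$, where $\bar y=s_2^\lambda(\text{centroid of }\mu_\lambda)$; since the centroid of $\mu_\lambda$ is the centroid of $A$, one computes $\bar y=\bigl(1-\tfrac{\lambda}{2},\tfrac{\sqrt{3}}{6}\lambda\bigr)\notin\ell_1$, and, because $\iota$ fixes the origin and $\ell_1$ is the perpendicular bisector of $[x_\nu,\iota x_\nu]$,
\begin{equation*}
\bigl|\,|x_\nu-\bar y|^2-|\iota x_\nu-\bar y|^2\,\bigr|=4\operatorname{dist}(x_\nu,\ell_1)\operatorname{dist}(\bar y,\ell_1)\ge 2\tau_0(1-\lambda)>0.
\end{equation*}
An integration by parts (the two distance cumulative distribution functions agree at the endpoints $1-2\lambda$ and $1$) turns a gap $\Delta$ between the second moments into a gap at least $\Delta/(4\lambda)$ between these functions in sup norm, whence $\widetilde G(\nu)\ge\delta_0$. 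Making all of this effective is where most of the care is needed.

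Finally, for $\mu_\lambda$-a.e.\ $x$ the address $\omega(x)$ satisfies $\omega_{n+1}\ne\omega_{n+2}$ for infinitely many $n$, because the exceptional set of eventually constant addresses is countable, hence $\mu_\lambda$-null. For each such $n$ the previous step gives $\varepsilon_n\in[(1-2\lambda)\lambda^n,\lambda^n]$ with $|T_{\lambda,\varepsilon_n}(\mathbf 1)(x)|\ge\delta_0/2$; as $\varepsilon_n\to 0$ while $T_{\lambda,\lambda^k}(\mathbf 1)(x)=0$ for every $k$, the limit $\lim_{\varepsilon\to 0}T_{\lambda,\varepsilon}(\mathbf 1)(x)$ cannot exist, which is the assertion of the theorem. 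Everything except the lower bound on $\widetilde G$ is bookkeeping with the self-similar structure.
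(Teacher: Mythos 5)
Your argument is correct, but it reaches the a.e.\ divergence by a genuinely different route than the paper, so let me compare. The paper never uses the cancellation $T_\lambda^n(\mathbf 1)=0$ from Section 3, nor any symmetry/moment comparison: for a.e.\ $x$ and each index $i$ with $x_i\neq x_{i+1}$ it exhibits an annulus $B(x,C_\lambda R_i(x))\setminus B(x,R_i(x))$ whose intersection with $E_\lambda$ is a \emph{single} triangle $S^\lambda_{\beta_i(x)}$ of generation $i-1+m_\lambda$ (the third sibling iterated $m_\lambda$ times toward its far corner), on which the kernel has constant sign and modulus $\lambda^{-(i-1)d_\lambda}$; hence the difference of the two truncations equals exactly $\lambda^{m_\lambda d_\lambda}$, a fixed constant, at scales tending to $0$. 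You instead anchor the oscillation at the value $0$ attained along $\varepsilon=\lambda^n$ and prove a quantitative asymmetry lemma at the intermediate scales: reflection in the axis $\ell_1$, the centroid computation $\bar y=(1-\lambda/2,\sqrt3\lambda/6)$ with $\operatorname{dist}(\bar y,\ell_1)=(1-\lambda)/2$, the parallel-axis identity, and the conversion of a second-moment gap into a sup-norm gap of the two distance distribution functions (both supported in $[1-2\lambda,1]$), giving $\widetilde G(\nu)\ge m\tau_0(1-\lambda)/(2\lambda)$ with $\tau_0\simeq\lambda(1-2\lambda)/2$ whenever $\nu_1\neq\nu_2$; I checked these constants and the scheme does close up. One slip of wording: the inclusion $\{y\in E_\lambda:|x-y|>\varepsilon\}\subset Q_n'(x)\cup Q_n''(x)$ is false as written, since all of $E_\lambda\setminus P_n(x)$ lies at distance $>\lambda^n\ge\varepsilon$ from $x$; the correct statement is $\{y\in E_\lambda:\varepsilon<|x-y|\le\lambda^n\}\subset Q_n'(x)\cup Q_n''(x)$, and your displayed identity then follows because the contribution of $E_\lambda\setminus P_n(x)$ is $T_\lambda^n(\mathbf 1)(x)=0$, which you had already recorded — a fixable misstatement, not a gap. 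As for what each approach buys: the paper's annulus-isolation argument is shorter, purely geometric, and independent of the plateaus of $h_\lambda$, which is precisely why the paper's first remark can assert that the divergence proof survives replacing $h_\lambda(r)$ by $r^{d_\lambda}$; your proof is tied to the exact identity $T_\lambda^n(\mathbf 1)=0$ (hence to this particular $h_\lambda$) and so does not transfer unchanged to that homogeneous kernel, but in exchange it gives a slightly sharper picture — an explicit $\delta_0$ and the fact that the truncations genuinely oscillate between $0$ and a value $\ge\delta_0$ within every generation where the address switches.
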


\begin{proof}Let $\lambda \in (0,1/3)$, we want to show that for $\mu _{\lambda }$ a.e $x\in
E_{\lambda }$ the limit
\begin{equation*}
\underset{\varepsilon \rightarrow 0}{\lim
}\left|\int_{\mathbb{R}^{2}\backslash B(x,\varepsilon )}K_{\lambda
}(x,y)d\mu _{\lambda }y\right|
\end{equation*}
does not exist.
\begin{figure}[b]
\centering
\includegraphics[scale = 0.3]{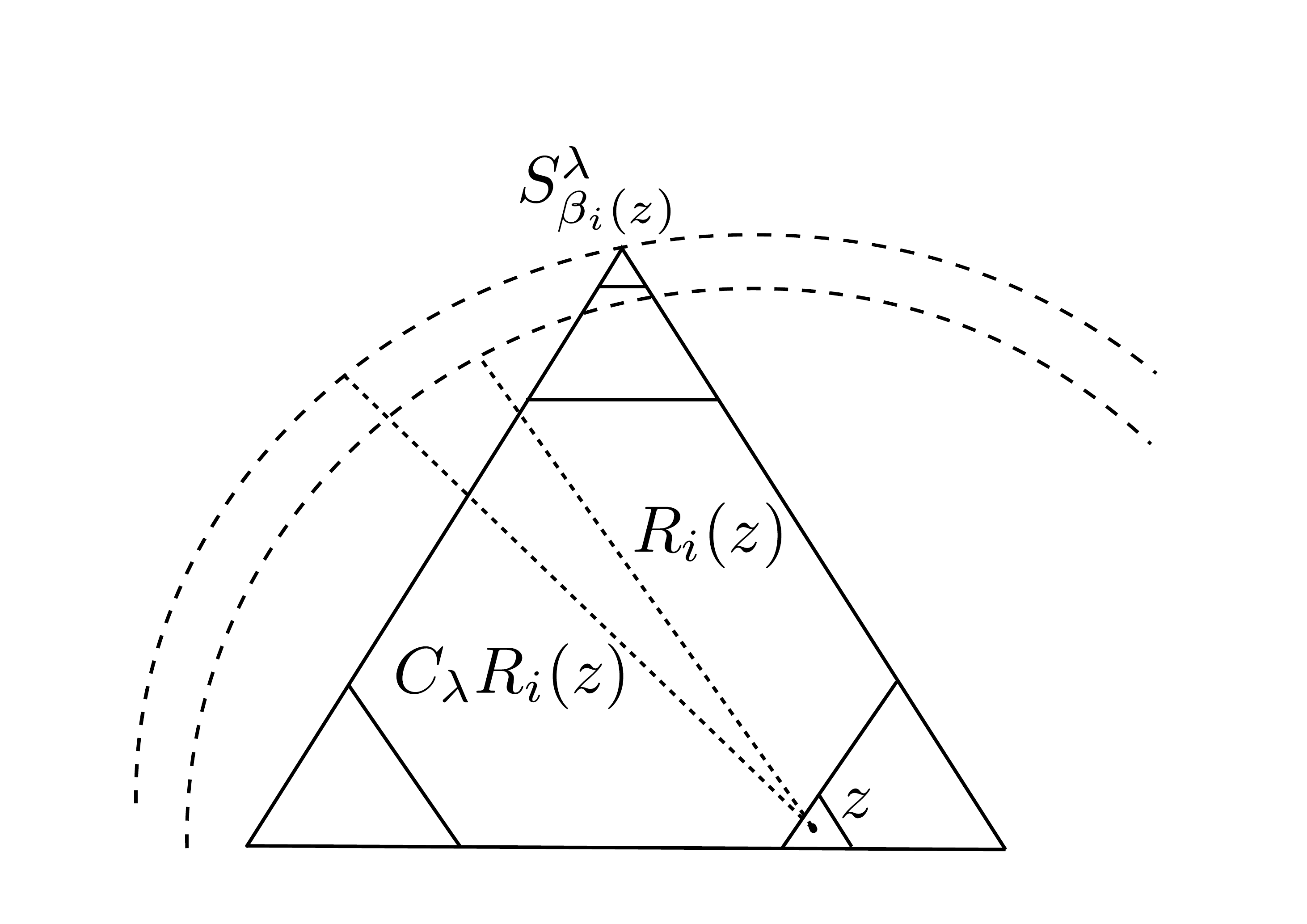}
\caption[]{}\label{pv}
\end{figure}
To every  $z\in E_{\lambda }$ assign naturally the code
$(z_{i})_{i\in \mathbb{N}}\in I^{\infty }$ such
 that $\{z\}=\underset{i\geq 1}{\bigcap
}S_{(z_{1},..,z_{i})}^{\lambda }$ and consider the set
\begin{equation*}
D_{\lambda }=\{z\in E_{\lambda }:z_{i}\neq z_{i+1}\text{ for
infinitely many }i^{\prime }s\}\text{ .}
\end{equation*}
The set $D_{\lambda }$ had full $\mu _\lambda$ measure as its
complement $E_{\lambda }\backslash D_{\lambda }$ is countable. In
fact the set $E_{\lambda }\backslash D_{\lambda }$ consists of the
vertices of every triangle $S_\alpha^\lambda$, $\alpha \in I^*$.

Notice that there exist some $C_{\lambda }>1$ and some $m_{\lambda
}\in \mathbb{N}$ such that for every $z\in D_{\lambda \text{ }}$ and
every $i\in \mathbb{N}^{\ast } $, satisfying $z_{i}\neq z_{i+1}$,
there exist  $ \beta _{i}(z) \in I^{i-1+m_{\lambda}}$ and positive
numbers $R_{i}(z)$ with the properties,
\begin{enumerate}
\item $\beta _{i}(z)=(z_{1},...,z_{i-1},\overset{m_{\lambda }\text{ times}}{%
\overbrace{y(z),..,y(z)}})$ where $y(z)\in
I\backslash\{z_{i},z_{i+1}\}$,
\item $R_{i}(z)\approx \lambda ^{i}$,
\item $(B(z,C_{\lambda }R_{i}(z))\backslash B(z,R_{i}(z)))\cap E_{\lambda
}=S_{\beta _{i}(z)}^{\lambda }$.
\end{enumerate}
See also Figure \ref{pv}. This geometric property of the sets
$E_\lambda$ forces the principal values of $T_{\lambda }$ to
diverge.

To see this, let some $x\in D_{\lambda }$ and denote $J_{x}=\{i\in
\mathbb{N}^{\ast }:x_{i}\neq x_{i+1}\}$. For all $i\in J_{x}$,
  \begin{eqnarray*}
  \lefteqn{\left| \int_{\mathbb{R}^{2}\backslash B(x,R_{i}(x))}K_{\lambda
}(x,y)d\mu _{\lambda }y-\int_{\mathbb{R}^{2}\backslash
B(x,C_{\lambda
}R_{i}(x))}K_{\lambda }(x,y)d\mu _{\lambda }y\right|} \\
  &=& \left| \int_{B(x,C_{\lambda }R_{i}(x))\backslash B(x,R_{i}(x))}\frac{%
\Omega _{\lambda }((x-y)\left| x-y\right| ^{-1})}{h_{\lambda
}(|x-y|)}d\mu
_{\lambda }y\right| \\
  &=&\left| \int_{S_{\beta _{i}(x)}^{\lambda }}\frac{\Omega _{\lambda
}((x-y)\left| x-y\right| ^{-1})}{h_{\lambda }(|x-y|)}d\mu _{\lambda
}y\right|
  \end{eqnarray*}
For all $x\in S_{a_{i}(x)}^{\lambda }$ and $y\in S_{\beta
_{i}(x)}^{\lambda } $, where $\alpha
_{i}(x)=(x_{1},...,x_{i},x_{i+1})$,
\begin{equation*}
(1-2\lambda )\lambda ^{i-1}\leq \left| x-y\right| \leq \lambda
^{i-1}
\end{equation*}
and
\begin{equation*}
 \Omega _{\lambda }\left( \frac{x-y}{\left| x-y\right|
}\right)=(-1)^{\varepsilon_i}
\end{equation*}
where $\varepsilon_i=1$ or $\varepsilon_i=-1$. Hence
\begin{eqnarray*}
\left| \int_{S_{\beta _{i}(x)}^{\lambda }}\frac{\Omega _{\lambda
}((x-y)\left| x-y\right| ^{-1})}{h_{\lambda }(|x-y|)}d\mu _{\lambda
}y\right|  &=&\int_{S_{\beta _{i}(x)}^{\lambda }}\frac{1}{h_{\lambda }(|x-y|)%
}d\mu _{\lambda }y \\
&= &\frac{\mu _{\lambda }(S_{\beta _{i}(x)}^{\lambda })}{(\lambda
^{i-1})^{d_{\lambda }}} \\
&=&\dfrac{(\lambda ^{i-1+m_{\lambda }})^{d_{\lambda }}}{(\lambda ^{i-1})^{d_{\lambda }}}%
=\lambda ^{m_{\lambda }d_{\lambda }}.
\end{eqnarray*}
As $R_{i}(x)\approx \lambda ^{i}\rightarrow 0$ we conclude that the
principal values of $ T_{\lambda }$ do not exist $\mu _\lambda $
a.e.
\end{proof}

\emph{Acknowledgements.} I am very grateful to my advisor, Professor
Pertti Mattila, for many ideas, discussions and suggestions during
the preparation of this note. I would also like to thank the referee
for useful comments and suggestions.

\vspace{1cm}
\begin{footnotesize}
{\sc Department of Mathematics and Statistics,
P.O. Box 68,  FI-00014 University of Helsinki, Finland,}\\
\emph{E-mail address:} \verb"vasileios.chousionis@helsinki.fi"
\end{footnotesize}

\end{document}